\newcommand{\R}{{I\!\!R}}
\def\R{{\rm I}\! {\rm R}}
\def\ss{sequential splitting}
\newtheorem{algorithm}[theorem]{Algorithm}
\newtheorem{assumption}[theorem]{Assumption}
\begin{document}

\pagestyle{headings}

\title{Iterative and Non-iterative Splitting approach of a stochastic Burgers' equation}
\author{J\"urgen Geiser and Karsten Bartecki}
\institute{Ruhr University of Bochum, \\
The Institute of Theoretical Electrical Engineering, \\
Universit\"atsstrasse 150, D-44801 Bochum, Germany \\
\email{juergen.geiser@ruhr-uni-bochum.de}}
\maketitle

\begin{abstract}

In this paper we present iterative and noniterative splitting methods,
which are used to solve stochastic Burgers' equations.
The non-iterative splitting methods are based on Lie-Trotter and
Strang-splitting methods, while the iterative splitting
approaches are based on the exponential integrators for
stochastic differential equations. Based on the nonlinearity of the
Burgers' equation, we have investigated that the iterative schemes
are more accurate and efficient as the non-iterative methods.

\end{abstract}

{\bf Keywords}: Burgers' equation, stochastic differential equation, Operator splitting, iterative splitting, splitting analysis \\

{\bf AMS subject classifications.} 35K25, 35K20, 74S10, 70G65.

\section{Introduction}

We are motivated to model the nonlinear transport phenomenon
with stochastic perturbations.
Such modelling problems arise in many fields, such as biology, physics, engineering and economics, where random phenomena play an important role. 
We concentrate on nonlinear transport with stochastic reaction, which 
can be modelled by the Burgers' equation with an additional stochastic part.
Many applications in transport phenomena can be modelled
with uncertainties in combining deterministic and stochastic operators, see \cite{karlson2017}.
To solve such delicate problems, we consider operator splitting approaches to decompose into deterministic and stochastic operators, see \cite{geiser2019_1}
and \cite{ninomya2008}.
The numerical schemes are discussed as noniterative schemes in the
direction of Lie-Trotter-splitting and Strang-splitting schemes, see \cite{trotter59} and \cite{stra68},
which we call AB-, ABA-, BAB-schemes. The iterative schemes are discussed
in the direction of Picard's iterative schemes, see \cite{geiser_2011}.
We apply the extension of the deterministic to the stochastic schemes,
which are given in the numerical stochastics literature, e.g., \cite{kloeden1992}, 
\cite{oksendal02} and \cite{evans2013}.

The benefit of splitting approaches arises in decomposing different 
operators, which can be solved numerically with more optimal methods.
In the underlying stochastic Burgers' equation, we decompose the deterministic
part, which has to be solved with fast conservation methods, see \cite{karlson2018} and \cite{karlson2017},
and the stochastic part, which has to be solved as a stochstatic ordinary
differential equation, see \cite{karlson2018} and \cite{karlson2017}.

The paper is outlined as following:
The model is given in section \ref{modell}.
The numerical methods are discussed in section \ref{num}.
The numerical analysis is presented in section \ref{analysis}.
In section \ref{numerics}, we present the numerical simulations.
In the contents, that are given in section \ref{concl}, 
we summarize our results.

\section{Mathematical Model}
\label{modell}

In modeling, we concentrate on nonlinear stochastic PDEs (SPDEs),
which are important to fluid dynamics. Here, we deal with
stochastic Burgers equation (SBE) driven by linear multiplicative noise,
see \cite{hairer2010} and \cite{iliescu2018}.

The SBE is given as:
\begin{eqnarray}
\label{stoch_1}
&& d c = (\nu u_{xx} - f(c)_x) dt +  \sigma(c) \; d W_t ,  \; (x, t) \in [0, X] \times [0, T] , \\
&& c(x, 0) = c_0(x) , \; x \in [0, X] , \\
&& c(0, t) = c(X, t) = 0 , \; \in t \ge 0 ,
\end{eqnarray}
where $\nu$ is a positive diffusion coefficient, $W_t$ is a two-sided one-dimensional Wiener process. $f(c)$ is the nonlinear flux-function, e.g., $f(c) = c^2$. Further $f$ is globally Lipschitz continuous in $\R$. $\sigma$ is multiplicative noise function and Lipschitz continuous in $c$, which measures the amplitude of the noise. $c_0$ is an initial condition. 

Such SPDEs driven by linear multiplicative noise and especially the SBE (\ref{stoch_1}) are used to model turbulences or non-equilibrium phase transitions, see
\cite{birnir2013} and \cite{munoz2004}. Further, the models are used to deal with randomly fluctuating environments \cite{bedd1977} and also to model of parameter 
disturbances based on uncertainties, see \cite{bloemker2007}.

We deal with a stochastic balance equation, which is given in the nonlinear
transport case as a pure stochastic Burger's equation:
\begin{eqnarray}
\label{stoch_1}
&& d c +  (\frac{\partial f(c)}{\partial x}) dt = \sigma(c) d W_t ,  \; (x, t) \in [0, X] \times [0, T] , \\
&& c(x, 0) = c_0(x) , \; x \in [0, X] ,  \\
&& c(0, t) = c(X, t) = 0 , \; \in t \ge 0 ,
\end{eqnarray}
where $\sigma$ is the multiplicative noise function, $f(c)$ is the nonlinear flux function and $W(t)$ is a Wiener process.

In the next section, we deal with the different solver methods.

\section{Numerical Methods}
\label{num}

For the numerical methods, we apply based on the idea
of separating the deterministic and stochastic operators, the following 
numerical approaches, see also \cite{karlson2017,karlson2018}:
\begin{itemize}
\item Deterministic solver for the pure Burgers' equation: Finite volume discretization for the space with the conservation law solver of Engquist-Osher, see \cite{harten1986} and \cite{holden2015}.
\item Stochastic solver for the pure stochastic ODE: Euler-Maruyama solver, Milstein solver or stochastic RK solver, see \cite{kloeden1992} and \cite{roessler2006}.
\end{itemize}

The application of the separated solver methods is done with different 
splitting approaches, see an overview in \cite{geiser_2011}.

We have the following parts of the full equation (\ref{stoch_1}).
\begin{eqnarray}
&& \frac{\partial c}{\partial t} +  \frac{\partial f(c)}{\partial x} = \sigma(c) \frac{\partial W}{\partial t} ,  
\end{eqnarray}
where 

\begin{itemize}
\item The deterministic part:
\begin{eqnarray}
\label{determ_1}
&& \frac{\partial c}{\partial t} +  \frac{\partial f(c)}{\partial x} = 0 , \; c(x, 0) = c_0(x) ,  
\end{eqnarray}
where we have the solution
\begin{eqnarray}
&& c(t) = S_{Burgers} c_0 ,  
\end{eqnarray}
\item The stochastic part:
\begin{eqnarray}
\label{stoch_1}
&& d c = \sigma(c) \; d W , \; c(x, 0) = c_0(x) ,  
\end{eqnarray}
where we have the solution
\begin{eqnarray}
&& c(x,t) = c_0(x) + \int_{0}^t \sigma(c) \; d W .  
\end{eqnarray}

\end{itemize}

We concentrate on the following methods:

\begin{itemize}
\item Noniterative methods based on exponential splitting approaches:
\begin{itemize}
\item AB-splitting (Lie-Trotter scheme, see \cite{karlson2017}),
\item ABA splitting (Strang-splitting scheme, see \cite{ninomya2008}),
\item BAB splitting (Strang-splitting scheme, see \cite{ninomya2008}),
\end{itemize}
\item Iterative method based on successive relaxation approaches:
\begin{itemize}
\item Iterative splitting (Picard's approximation, see \cite{geiser2019_1}).
\end{itemize}
\end{itemize}

In the following, we discuss the different schemes.

\subsection{Noniterative splitting approaches}

The noniterative splitting approaches obtained results in one cycle,
which means it is not necessary to relax the solution.
We consider the ideas related to the exponential splitting based on the
Lie-Trotter schemes, see \cite{trotter59} and \cite{geiser_2011}, while we compute the numerical results 
for each operator-equation, see equation (\ref{determ_1})-(\ref{stoch_1}) and couple the results as an initial value of the successor operator-equation, for example, we apply the
results of equation (\ref{determ_1}) as an initial value for the equation (\ref{stoch_1}), see also 
\cite{geiser_2011}.

\begin{enumerate}
\item AB splitting: \\

We have the following AB splitting approaches:

\begin{itemize}
\item A-Part
\begin{eqnarray}
&& \frac{\partial c_1}{\partial t} +  \frac{\partial f(c_1)}{\partial x} = 0 , \; c_1(x, t^n) = c(x, t^n) ,  
\end{eqnarray}
where we have the solution
\begin{eqnarray}
&& c_1(t^{n+1}) = S_{Burgers} c_1(t^n) ,  
\end{eqnarray}
\item B-part:
\begin{eqnarray}
&& d c_2 = \sigma(c_2) \; d W , \; c_2(x, t^n) = c_1(x, t^{n+1}) ,  
\end{eqnarray}
where we have the solution
\begin{eqnarray}
&& c_2(x,t^{n+1}) = c_2(x, t^n) + \int_{t^n}^{t^{n+1}} \sigma(c_2) \; d W .  
\end{eqnarray}
where we have the solution $c(x, t^{n+1}) =  c_2(x,t^{n+1})$.
\end{itemize}

\item ABA splitting: \\

We have the following ABA splitting approaches:
\begin{itemize}
\item A-Part ($\Delta t/2$)
\begin{eqnarray}
&& \frac{\partial c_1}{\partial t} +  \frac{\partial f(c_1)}{\partial x} = 0 , \; c_1(x, t^n) = c(x, t^n) ,  
\end{eqnarray}
where we have the solution
\begin{eqnarray}
&& c_1(t^{n+1/2}) = S_{Burgers} c_1(t^n) , \mbox{with } \Delta t/2 . 
\end{eqnarray}
\item B-part:
\begin{eqnarray}
&& d c_2 = \sigma(c_2) \; d W , \; c_2(x, t^n) = c_1(x, t^{n+1/2}) ,  
\end{eqnarray}
where we have the solution
\begin{eqnarray}
&& c_2(x,t^{n+1}) = c_2(x, t^{n+1/2}) + \int_{t^n}^{t^{n+1}} \sigma(c_2) \; d W .  
\end{eqnarray}
\item A-Part ($\Delta t/2$)
\begin{eqnarray}
&& \frac{\partial c_3}{\partial t} +  \frac{\partial f(c_3)}{\partial x} = 0 , \; c_3(x, t^{n+1/2}) = c_2(x, t^{n+1}) ,  
\end{eqnarray}
where we have the solution
\begin{eqnarray}
&& c_3(t^{n+1/2}) = S_{Burgers} c_2(t^{n+1}) , \mbox{with } \Delta t/2 . 
\end{eqnarray}
where  we have the solution $c(x, t^{n+1}) =  c_3(x,t^{n+1})$.

\end{itemize}

\item BAB splitting: \\

We have the following BAB splitting approach:
\begin{itemize}
\item B-part ($\Delta t/2$):
\begin{eqnarray}
&& d c_1 = \sigma(c_1) \; d W , \; c_2(x, t^n) = c(x, t^{n}) ,  
\end{eqnarray}
where we have the solution
\begin{eqnarray}
&& c_1(x,t^{n+1/2}) = c(x, t^{n}) + \int_{t^n}^{t^{n+1/2}} \sigma(c_2) \; d W , \\
&& c_1(x,t^{n+1/2}) = S_{Stoch.} c(x, t^{n}) . 
\end{eqnarray}
\item A-Part ($\Delta t$)
\begin{eqnarray}
&& \frac{\partial c_2}{\partial t} +  \frac{\partial f(c_2)}{\partial x} = 0 , \; c_2(x, t^n) = c_1(x, t^{n+1/2}) ,  
\end{eqnarray}
where we have the solution
\begin{eqnarray}
&& c_2(t^{n+1}) = S_{Burgers} c_1(t^{n+1/2}) , \mbox{with } \Delta t . 
\end{eqnarray}
\item B-part:
\begin{eqnarray}
&& d c_3 = \sigma(c_3) \; d W , \; c_3(x, t^{n+1/2}) = c_2(x, t^{n+1}) ,  
\end{eqnarray}
where we have the solution
\begin{eqnarray}
&& c_3(x,t^{n+1}) = c_2(x, t^{n+1}) + \int_{t^{n+1/2}}^{t^{n+1}} \sigma(c_2) \; d W .  
\end{eqnarray}
where we have the solution
\begin{eqnarray}
&& c_3(t^{n+1}) = S_{Stoch.} c_2(t^{n+1}) , \mbox{with } \Delta t/2 . 
\end{eqnarray}
where  we have the solution $c(x, t^{n+1}) =  c_3(x,t^{n+1})$.

\end{itemize}

\end{enumerate}

\subsection{Iterative splitting}

The iterative splitting approaches are based on successive relaxation, means 
we apply several times in the same time-interval the solver method and improve
cyclic the solutions in this local time-interval, see \cite{geiser_2011}. 

To apply the iterative approaches, we can apply the iterative solvers
before or after a spatial discretization, means:
\begin{itemize}
\item 1.) Iterative splitting after the discretization, we apply iterative schemes for the nonlinearities.
\item 2.) Iterative splitting before the discretization, we apply the
iterative scheme to decompose the differential equation into a kernel and
perturbation term.
\end{itemize}

\subsubsection{Iterative scheme after discretization}

We have the following SDE in continuous form:
\begin{eqnarray}
&& \frac{\partial c}{\partial t} +  \frac{\partial f(c)}{\partial x} =  \sigma(c) \; \frac{d W}{dt} , \; c(x, t^n) = c(x, t^n) ,  
\end{eqnarray}
and in the SDE form as:
\begin{eqnarray}
&&  d c = A(c) dt + B(c) d W , \; c(x, t^n) = c(x, t^n) ,  
\end{eqnarray}

We apply the discretization in time (Milstein-scheme) and space (finite-volume scheme) and obtain:
\begin{eqnarray}
\label{stand_mil}
&&  c^{n+1} = c^n + A(c^n) \Delta t + B(c^n) \Delta W +  \frac{1}{2} \; B(c^n) \frac{\partial B(c)}{\partial c}|_{c = c^n} (\Delta W^2 - \Delta t) , 
\end{eqnarray}
where we have the initialization $c^n = c(x, t^n)$.

Further the solution of the Burgers' equation is given as:
\begin{eqnarray}
&&  c^{n+1} = c^n + A(c^n) \Delta t , \\
&&   c^{n+1} = S_{Burgers}(\Delta t, c^n) c^n , 
\end{eqnarray}
while we apply $c^n$ for the linearization in the Burgers' equation.

We apply a fixpoint-scheme to improve the standard Milstein scheme (\ref{stand_mil}) and obtain:
\begin{eqnarray}
\label{mod_mil}
&&  c_i^{n+1} =  S_{Burgers}(\Delta t, c_{i-1}^{n+1}) c^n + B(c_{i-1}^{n+1}) \Delta W + \nonumber \\ 
&& +  \frac{1}{2} \; B(c_{i-1}^{n+1}) \frac{\partial B(c)}{\partial c}|_{c = c_{i-1}^{n+1}} (\Delta W^2 - \Delta t) , 
\end{eqnarray}
where we have the initialization $c_0^{n+1} = c^n$.

We deal with the following iterative splitting approaches:
\begin{itemize}
\item Standard Milstein-scheme of second order ($i=1$):
\begin{eqnarray}
\label{mod_mil}
&&  c_1^{n+1} =  S_{Burgers}(\Delta t, c^{n}) c^n + B(c^{n}) \Delta W + \nonumber \\ 
&& +  \frac{1}{2} \; B(c^{n}) \frac{\partial B(c)}{\partial c}|_{c = c^{n}} (\Delta W^2 - \Delta t) , 
\end{eqnarray}
\item Second order iterative splitting approach (related to the standard Milstein-scheme of second order) ($i > 1$):
\begin{eqnarray}
&& c_i(t^{n+1}) =  S_{Burgers}(\Delta t, c_{i-1}^{n+1}) c^n + B(c_{i-1}^{n+1}) \Delta W + \nonumber \\ 
&& +  \frac{1}{2} \; B(c_{i-1}^{n+1}) \frac{\partial B(c)}{\partial c}|_{c = c_{i-1}^{n+1}} (\Delta W^2 - \Delta t) ,
\end{eqnarray}
where $\Delta W_{t^{n+1}} = W_{t^{n+1}} - W_{t^n} = \sqrt{\Delta t} \; \xi$ and $\xi$ obeys the Gaussian normal distribution $N(0, 1)$ with $\langle \xi \rangle = 0$ and $ \langle \xi^2 \rangle = 1$.

\end{itemize}

\subsubsection{Iterative scheme before the dicretization}

We have the following iterative splitting approaches, before the
discretization:

We have $i = 1, 2, \ldots, I$ with:
\begin{eqnarray}
&& \frac{\partial c_i}{\partial t} +  \frac{\partial f(c_i)}{\partial x} =  \sigma(c_{i-1}) \; \frac{d W}{dt} , \; c_i(x, t^n) = c(x, t^n) ,  
\end{eqnarray}
where we have the initialization $c_0(x, t) = c(x, t^n)$.

We have the solution
\begin{eqnarray}
&& \int_{t^n}^{t^{n+1}} \frac{\partial c_i}{\partial t} dt =  - \int_{t^n}^{t^{n+1}}  \frac{\partial f(c_i)}{\partial x} +   \int_{t^n}^{t^{n+1}} \sigma(c_{i-1}(t)) \; d W_t , \; c_i(x, t^n) = c(x, t^n) ,  \nonumber \\
&& c_i(t^{n+1}) = S_{Burgers}(\Delta t, c(t^n)) \; c(t^n) + \nonumber \\
&& + \int_{t^n}^{t^{n+1}} S_{Burgers}(t^{n+1} -s, c(t^{n+1} -s )) \; \sigma(c_{i-1}(s)) \; d W_s ,  
\end{eqnarray}
with initialization $c_0(t) = c(t^n)$ and $i = 1, \ldots, I$.

We deal with the following iterative splitting approaches:
First order iterative splitting approach (related to the AB-splitting approach, means with the rectangle rule and the semi-analytical approach):
\begin{itemize}

\item $i = 0$ (Initialization):
\begin{eqnarray}
c_0(t^{n+1}) = S_{Burgers}(\Delta t, c(t^n)) c(t^n) , 
\end{eqnarray}
where $c_{-1}(t) = 0$.

\item $i = 1$ (first step):
\begin{eqnarray}
&& c_1(t^{n+1}) =  S_{Burgers}(\Delta t, c(t^n)) \; c(t^n) + \nonumber \\
&&  + \int_{t^n}^{t^{n+1}} S_{Burgers}(t^{n+1} - s, c(t^{n+1} -s)) \; \sigma(c_{0}(s)) \; d W_s , 
\end{eqnarray}
where we apply the Ito's rule with a first order scheme (Euler-Maryama-scheme) and obtain:
\begin{eqnarray}
&& c_1(t^{n+1}) =  S_{Burgers}(\Delta t, c(t^n)) c(t^n) + \\
&& +  S_{Burgers}(\Delta t, c(t^n)) \sigma(c_{0}(t^{n+1})) \; \Delta W_{t^{n+1}} , \nonumber 
\end{eqnarray}
where $\Delta W_{t^{n+1}} = W_{t^{n+1}} - W_{t^n} = \sqrt{\Delta t} \; \xi$ and $\xi$ obeys the Gaussian normal distribution $N(0, 1)$ with $\langle \xi \rangle = 0$ and $ \langle \xi^2 \rangle = 1$.

We improve the order to $2$ with the Milstein approach in the stochastic term and obtain:
\begin{eqnarray}
&& c_1(t^{n+1}) =  S_{Burgers}(\Delta t, c(t^n)) c(t^n) + \\
&& +  S_{Burgers}(\Delta t, c(t^n)) \sigma(c_{0}(t^{n+1})) \; \Delta W_{t^{n+1}} + \nonumber \\
&&  + \frac{1}{2} \left( S_{Burgers}(\Delta t, c(t^n)) \sigma(c_{0}(t^{n+1}))  \right) \cdot \nonumber \\
&& \cdot \left(\frac{\partial \left(S_{Burgers}(\Delta t, c(t^n)) \sigma(c)\right)}{\partial c} \right)|_{c_{0}(t^{n+1})}  (\Delta W_{t^{n+1}}^2 - \Delta t) , \nonumber 
\end{eqnarray}
and result to (while $S_{Burgers}(\Delta t, c(t^n))$ is linear and not dependent of $c$, we only have to apply the derivative to $\sigma(c)$).

The algorithm for $i=1$ is given in \ref{algo_1}.
We have to compute the solutions $c(t^{n+1})$ for $n = 0, \ldots, N$.

\begin{algorithm}
\label{algo_1}

We start with the initialization $c(t^0) = c_0$ (initial value)
and $n = 0$.

\begin{enumerate}
\item We compute $i = 1$:
\begin{eqnarray}
&& c_1(t^{n+1}) =  S_{Burgers}(\Delta t, c(t^n)) c(t^n) + \\ 
&& +  S_{Burgers}(\Delta t, c(t^n)) \sigma(c_{0}(t^{n+1})) \; \Delta W_{t^{n+1}} +  \nonumber \\
&&  + \frac{1}{2} \left( S_{Burgers}(\Delta t, c(t^n)) \right)^2 \sigma(c_{0}(t^{n+1})) \cdot \nonumber \\
&& \cdot \left(\frac{\partial \left( \sigma(c)\right)}{\partial c} \right)|_{c_{0}(t^{n+1})}  (\Delta W_{t^{n+1}}^2 - \Delta t) , \nonumber 
\end{eqnarray}
we have $c_{0}(t^{n+1}) = c(t^n)$ as starting value. \\
\item We obtain the next solution $c(t^{n+1}) = c_1(t^{n+1})$, 
If $n = N$, we stop, \\
else we apply $n = n+1$ and goto step 1. \\

\end{enumerate}

\end{algorithm}

\end{itemize}


Second and third order iterative splitting approach (related to the ABA-splitting approach, means with the rectangle rule and the semi-analytical approach):

The next algorithm for $i=2$ is given in \ref{algo_1_1}, we improve the 
last $c_1(t^n)$ with an underlying ABA-method.
We have to compute the solutions $c(t^{n+1})$ for $n = 0, \ldots, N$.

\begin{algorithm}
\label{algo_1_1}

We start with the initialization $c(t^0) = c_0$ (initial value)
and $n = 0$.

\begin{enumerate}
\item We compute $i = 1$:
\begin{eqnarray}
&& \hspace{-2.0cm} c_1(t^{n+1}) =  S_{Burgers}(\Delta t, c(t^n)) c(t^n) +  S_{Burgers}(\Delta t, c(t^n)) \sigma(c(t^{n})) \; \Delta W_{t^{n+1}} +  \\
&&  + \frac{1}{2} \left( S_{Burgers}(\Delta t, c(t^n)) \right)^2 \sigma(c(t^{n})) \left(\frac{\partial \left( \sigma(c)\right)}{\partial c} \right)|_{c(t^{n})}  (\Delta W_{t^{n+1}}^2 - \Delta t) , \nonumber 
\end{eqnarray}
we have $c_{0}(t^{n+1}) = c(t^n)$ as starting value. \\
\item We compute $i = 2$ (with ABA as solution for $c_1(t^n)$):

\begin{eqnarray}
&& \hspace{-1.5cm} c_2(t^{n+1}) =   S_{Burgers}(\Delta t, c_{ABA}(t^n)) c_{ABA}(t^n) + \\
&& +  S_{Burgers}(\Delta t, c_{ABA}(t^n)) \sigma(c_{ABA}(t^{n})) \; \Delta W_{t^{n+1}} + \nonumber \\
&&  + \frac{1}{2} \left( S_{Burgers}(\Delta t, c_{ABA}(t^n)) \right)^2 \sigma(c_{ABA}(t^{n})) \cdot \nonumber \\
&& \cdot \left(\frac{\partial \left( \sigma(c)\right)}{\partial c} \right)|_{c_{ABA}(t^{n})}  (\Delta W_{t^{n+1}}^2 - \Delta t)  , \nonumber 
\end{eqnarray}
we have $\Delta W_{t^{n+1}} = W_{t^{n+1}} - W_{t^n} = \sqrt{\Delta t} \; \xi$ and $\xi$ obeys the Gaussian normal distribution $N(0, 1)$ with $\langle \xi \rangle = 0$ and $ \langle \xi^2 \rangle = 1$. \\
\item We obtain the next solution $c(t^{n+1}) = c_2(t^{n+1})$, 
If $n = N$, we stop, \\
else we apply $n = n+1$ and goto step 1. \\

\end{enumerate}

\end{algorithm}

The next algorithm for $i=2$ is given in \ref{algo_1_2}, we improve the 
last $c_1(t^n)$ with additional intermediate time-steps which are computed by an underlying ABA-method.
We have to compute the solutions $c(t^{n+1})$ for $n = 0, \ldots, N$.

\begin{algorithm}
\label{algo_1_2}

We start with the initialization $c(t^0) = c_0$ (initial value)
and $n = 0$.

\begin{enumerate}
\item We compute $i = 1$:
\begin{eqnarray}
&& \hspace{-2.0cm} c_1(t^{n+1}) =  S_{Burgers}(\Delta t, c(t^n)) c(t^n) +  S_{Burgers}(\Delta t, c(t^n)) \sigma(c(t^{n})) \; \Delta W_{t^{n+1}} +  \\
&&  + \frac{1}{2} \left( S_{Burgers}(\Delta t, c(t^n)) \right)^2 \sigma(c(t^{n})) \left(\frac{\partial \left( \sigma(c)\right)}{\partial c} \right)|_{c(t^{n})}  (\Delta W_{t^{n+1}}^2 - \Delta t) , \nonumber 
\end{eqnarray}
we have $c_{0}(t^{n+1}) = c(t^n)$ as starting value. \\
\item We compute $i = 2$ (with ABA as solution for $c_1(t^n)$):

\begin{eqnarray}
&& \hspace{-1.5cm} c_2(t^{n+1}) =   S_{Burgers}(\Delta t/2, c(t^n)) c(t^n) + \\
&& + S_{Burgers}(\Delta t/2, c_{ABA}(t^{n+1/2})) c_{ABA}(t^{n+1/2}) + \nonumber \\
&& +  S_{Burgers}(\Delta t/2, c(t^n)) \sigma(c(t^{n})) \; \Delta W_{t^{n+1/2}} + \nonumber \\
&& +  S_{Burgers}(\Delta t/2, c_{ABA}(t^{n+1/2})) \sigma(c_{ABA}(t^{n+1/2})) \; \Delta W_{t^{n+1/2}} +  \nonumber \\
&&  + \frac{1}{2} \left( S_{Burgers}(\Delta t/2, c(t^n)) \right)^2 \sigma(c(t^{n})) \left(\frac{\partial \left( \sigma(c)\right)}{\partial c} \right)|_{c(t^{n})}  (\Delta W_{t^{n+1/2}}^2 - \Delta t/2)  + \nonumber \\
&&  + \frac{1}{2} \left( S_{Burgers}(\Delta t/2, c_{ABA}(t^{n+1/2})) \right)^2 \sigma(c_{ABA}(t^{n+1/2})) \cdot \nonumber \\
&& \cdot \left(\frac{\partial \left( \sigma(c)\right)}{\partial c} \right)|_{c_{ABA}(t^{n+1/2})}  (\Delta W_{t^{n+1/2}}^2 - \Delta t/2) , \nonumber 
\end{eqnarray}
we have $c(t^{n+1/2}) = c_{ABA}(t^{n+1/2})$ and  $\Delta W_{t^{n+1/2}} = W_{t^{n+1/2}} - W_{t^n} = \sqrt{\Delta t/2} \; \xi$ and $\xi$ obeys the Gaussian normal distribution $N(0, 1)$ with $\langle \xi \rangle = 0$ and $ \langle \xi^2 \rangle = 1$. \\
\item We obtain the next solution $c(t^{n+1}) = c_2(t^{n+1})$, 
If $n = N$, we stop, \\
else we apply $n = n+1$ and goto step 1. \\

\end{enumerate}

\end{algorithm}

In  figure \ref{fig_algo}, we have the improvement, which are done in Algorithm \ref{algo_1_1} and \ref{algo_1_2}.
\begin{figure}[ht]
\begin{center}  
\includegraphics[width=6.0cm,angle=-0]{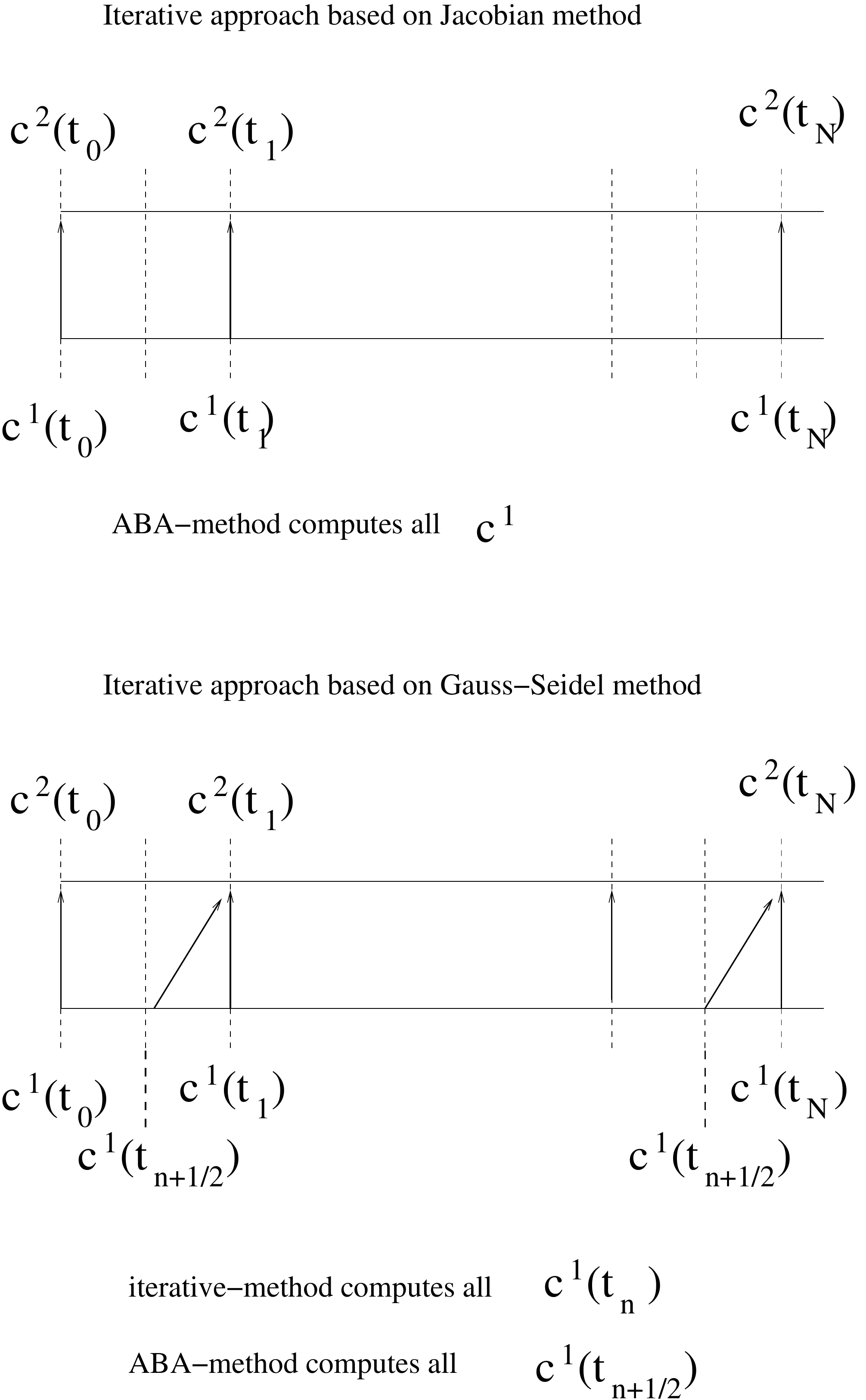}
\end{center}
\caption{\label{fig_algo} Function and visualization of the ABA-splitting approach for the Jacobian- (upper figure) and Gauss-Seidel-method (lower figure).}
\end{figure}

In figure \ref{fig_iter}, we see the further improvements of the iterative approaches.
\begin{figure}[ht]
\begin{center}  
\includegraphics[width=6.0cm,angle=-0]{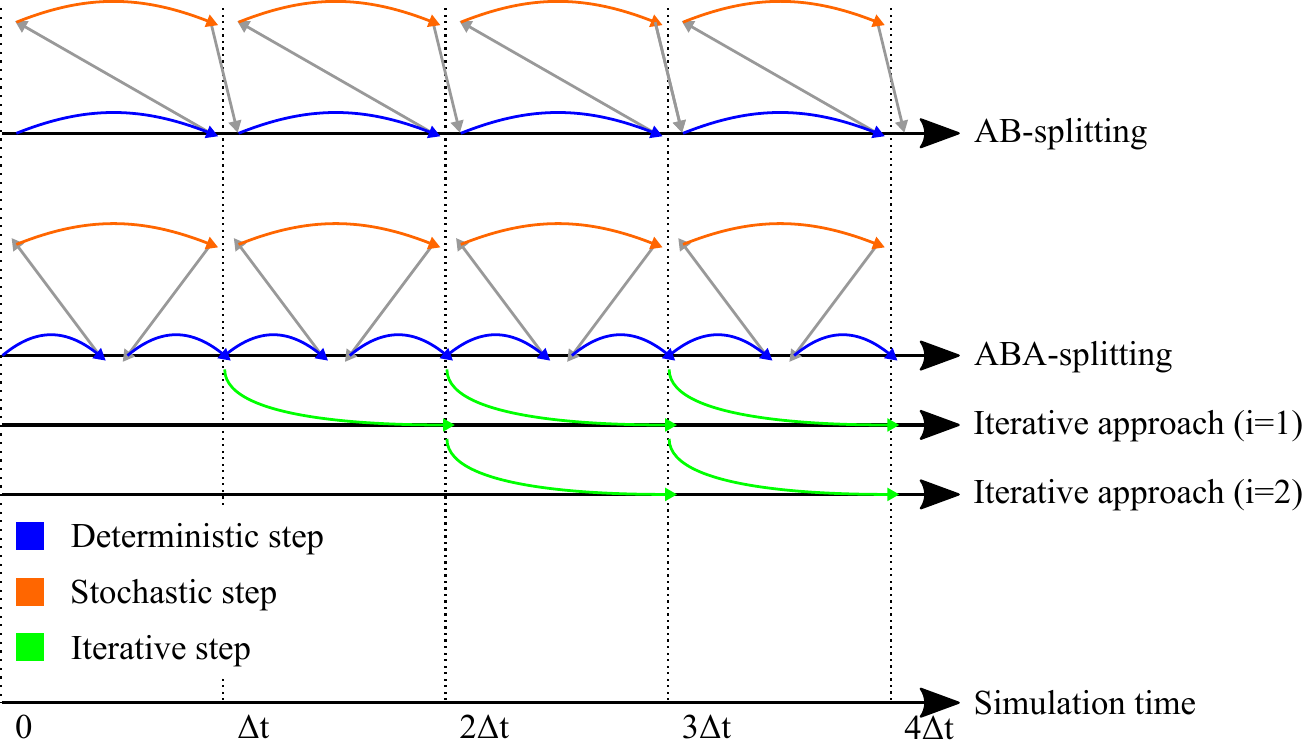}
\end{center}
\caption{\label{fig_iter} Illustration how the iterative algorithms work in principle.}
\end{figure}

\begin{figure}[ht]
\begin{center}  
\includegraphics[width=6.0cm,angle=-0]{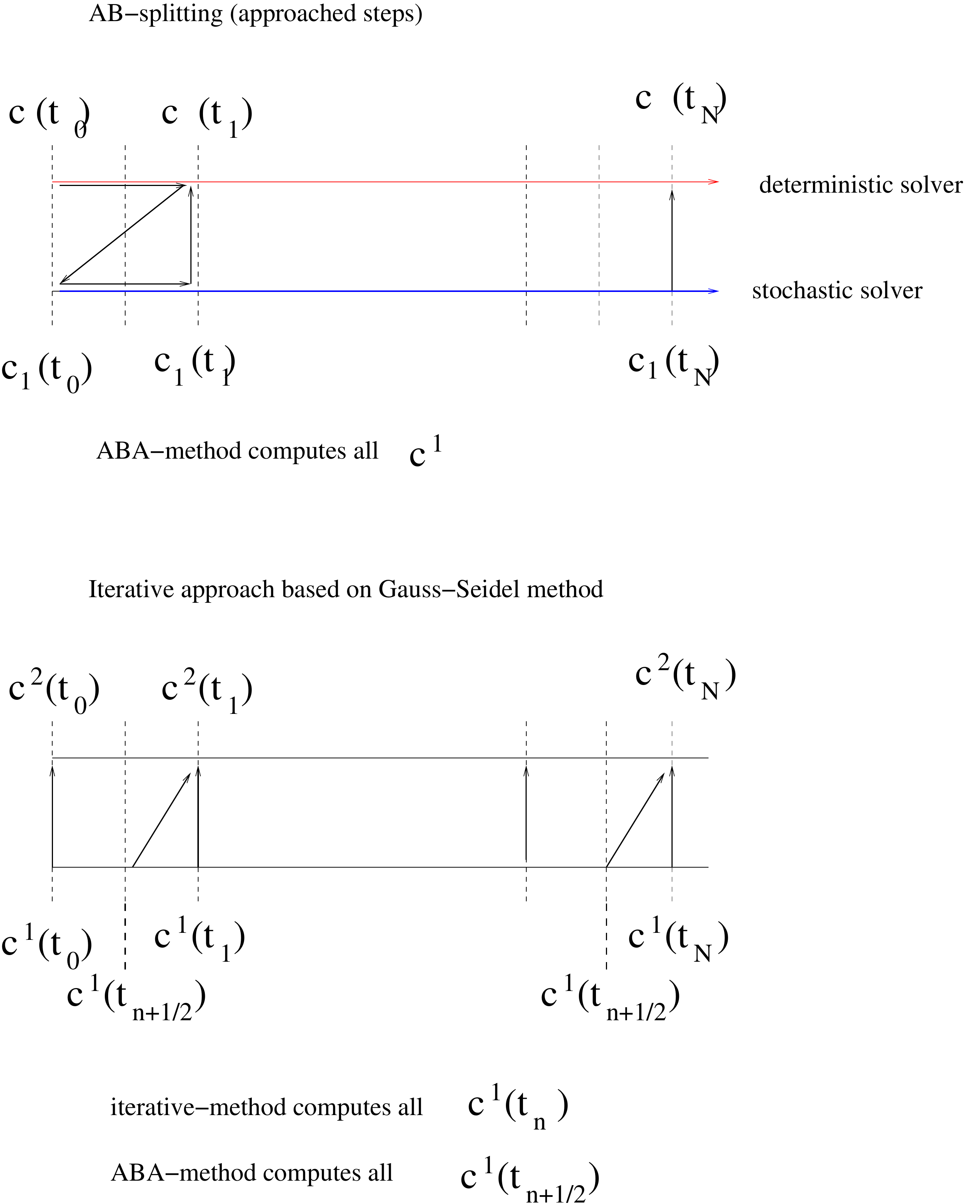}
\end{center}
\caption{\label{fig_iter} The improvements with the iterative approaches with the help of the AB- and ABA splitting approach.}
\end{figure}

\section{Numerical Analysis}
\label{analysis}

In the numerical analysis, we concentrate on the new iterative algorithms 
and present the approximation to the fixpoint of the solutions.

The iterative splitting scheme is given as:
\begin{eqnarray}
&& \frac{\partial c_i}{\partial t} dt =  - \frac{\partial f(c_{i-1}, c_i)}{\partial x} + \sigma(c_{i-1}(t)) \; d W_t , \; c_i(x, t^n) = c(x, t^n) ,
\end{eqnarray}
where we have $i = 1, 2, \ldots, I$ with the start condition $c_0(t) = c(t^n)$.

We apply the integration and have the solution
\begin{eqnarray}
\label{iter_fix}
&& \int_{t^n}^{t^{n+1}} \frac{\partial c_i}{\partial t} dt =  - \int_{t^n}^{t^{n+1}}  \frac{\partial f(c_{i-1}, c_i)}{\partial x} +   \int_{t^n}^{t^{n+1}} \sigma(c_{i-1}(t)) \; d W_t , \; c_i(x, t^n) = c(x, t^n) ,  \nonumber \\
&& c_i(t^{n+1}) = S_{Burgers}(\Delta t, c_{i-1}(t^n)) \; c(t^n) + \nonumber \\
&& + \int_{t^n}^{t^{n+1}} S_{Burgers}(t^{n+1} -s, c_{i-1}(t^{n+1} -s )) \; \sigma(c_{i-1}(s)) \; d W_s ,  
\end{eqnarray}
with initialization $c_0(t) = c(t^n)$ and $i = 1, \ldots, I$.

\begin{definition}
We have $\Omega \subset \R^N$ and $G : \Omega \rightarrow \R^M$.
Further, $G$ is Lipschitz contiuous on $\Omega$ with Lipschitz-constant $\gamma$
if
\begin{eqnarray}
|| G(x) - G(y) || \le \gamma || x - y||,
\end{eqnarray}
for all $x, y \in \Omega$.
\end{definition}

We have the following assuptions:
\begin{assumption}
We have the Lipschitz-constinuous functions $S_b(c)$ and $\sigma(c)$,
while we also assume $\frac{\partial \sigma}{\partial c}$ is Lipschitz continuous.
\end{assumption}

Then, we have the following Lemma:
\begin{lemma}
We have $\Omega \subset \R^N$ and $S_b, \frac{\partial \sigma}{\partial c} : \Omega \rightarrow \R^M$.
Further, $S_b$ and  $\frac{\partial \sigma}{\partial c}$ are  contraction mappings on $\Omega$, while we assume $\frac{\partial \sigma}{\partial c}$ are Lipschitz continuous with constants $\gamma_1 \le 1$ and $\gamma_2 \le 1$.
\end{lemma}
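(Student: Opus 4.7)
The plan is to verify the two contraction claims separately, since the statement essentially refines the preceding Lipschitz assumption into two quantitative bounds, one for the deterministic flow $S_b = S_{Burgers}$ and one for the derivative $\partial\sigma/\partial c$ that appears in the Milstein correction of the iterative scheme \eqref{iter_fix}.

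First I would handle $S_b$. Since $S_b(\Delta t,\cdot)$ is the solution operator of the pure Burgers conservation law \eqref{determ_1} discretised by the Engquist–Osher finite volume scheme, the natural tool is Kruzhkov's $L^1$-contraction (or equivalently the discrete monotonicity/TVD property of the Engquist–Osher flux). That is, for any two initial states $u,v\in\Omega$ one has $\|S_b(\Delta t,\cdot)u-S_b(\Delta t,\cdot)v\|\le\|u-v\|$ in the chosen norm on $\Omega$. This already delivers the non-expansivity $\gamma_1\le 1$ without any extra work, and the globally Lipschitz hypothesis on $f$ in the paper guarantees that the semigroup is well defined on $\Omega$, so no issue of blow-up enters.

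Next I would take care of $\partial\sigma/\partial c$. The assumption states only that $\partial\sigma/\partial c$ is Lipschitz with some constant $L_\sigma$, so the claim $\gamma_2\le 1$ needs a normalisation argument: either (i) choose $\Omega$ of diameter at most $1/L_\sigma$ so that the effective Lipschitz constant on $\Omega$ drops below one, or (ii) absorb $L_\sigma$ into the step size via the factor $\Delta W^2-\Delta t$ that multiplies $\partial\sigma/\partial c$ in the Milstein term, using $\mathbb{E}[(\Delta W^2-\Delta t)^2]=2(\Delta t)^2$ so that for $\Delta t$ small enough the composed mapping is a contraction with constant $\gamma_2\le 1$. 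Either variant reduces the Lipschitz bound to the required contraction bound.

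The main obstacle I expect is option (ii) in the second step: the lemma is stated purely in terms of $\partial\sigma/\partial c$ as a map on $\Omega$, but the way it is used in \eqref{iter_fix} is inside the Milstein correction, so making $\gamma_2\le 1$ rigorous without hiding a step-size condition requires being explicit about whether the contraction is claimed on $\Omega$ itself or on its image under the Milstein operator. I would therefore state at the outset the working assumption — a CFL-type bound $\Delta t\le\Delta t_0$ together with a bounded domain $\Omega$ — and then derive both $\gamma_1\le 1$ and $\gamma_2\le 1$ as quantitative consequences, which is exactly the form needed to feed into the subsequent Banach-type convergence argument for the iterative splitting.
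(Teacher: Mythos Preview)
Your proposal is sound and in fact more careful than the paper's own argument, but the route differs in emphasis. For $S_b$ the paper does not invoke Kruzhkov's $L^1$-contraction or the TVD/monotonicity of the Engquist--Osher flux at all; it simply asserts that the discrete solution operator $S_b$ is bounded in operator norm and that $\|S_b(x^n)\|\le\gamma_1$ holds ``for sufficiently small $\Delta x$ and $\Delta t$'', i.e.\ a pure small-parameter boundedness claim rather than a structural contraction property. Your use of the entropy $L^1$-contraction gives $\gamma_1\le 1$ independently of the mesh size and is the more robust justification; the paper's version, by contrast, already hides a CFL-type smallness condition at this stage.

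For $\partial\sigma/\partial c$ the paper again stops at ``bounded and Lipschitz continuous'' and writes down the desired inequality without explaining how the Lipschitz constant is forced below one. Your two options (shrinking $\Omega$ or absorbing the constant into the Milstein factor $\Delta W^2-\Delta t$ via a step-size restriction) are exactly the missing normalisation step; the paper effectively takes this for granted. So your sketch is not wrong, but be aware that what the paper records as its proof is considerably shorter and less explicit than what you propose: if you are trying to match the paper, two displayed inequalities plus the phrase ``for sufficiently small $\Delta x,\Delta t$'' is all that appears there.
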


\begin{proof}
We have 
\begin{eqnarray}
|| S_b(x^n) x - S_b(y^n) y || \le \gamma_1 || x - y||,
\end{eqnarray}
While the operator $S_b$ for the pure deterministic Burgers' equation is
bounded with respect to $\Delta x$ and $\Delta t$ we obtain $|| S_b(x^n)|| \le \gamma_1$ for sufficient small $\Delta x$ and $\Delta t$.

Further, we have
\begin{eqnarray}
||  \frac{\partial \sigma}{\partial x}|_{x = x^n} x -  \frac{\partial \sigma}{\partial y}|_{y = y^n} y || \le \gamma_2 || x - y||,
\end{eqnarray}
while the operator $\frac{\partial \sigma}{\partial x}$ is bounded and
lipschitz continuous.

\end{proof}

\begin{theorem}

$\Omega$ is a closed subset of $\R^N$ and $S_B$ and $\frac{\partial \sigma}{\partial x}$
are contraction mappings on $\Omega$ with Lipschitz-constants 
$\gamma_1 < 1$ and $\gamma_2 < 1$, then the iterative scheme (\ref{iter_fix})
converge linearly to $x^*$ with the factor $\tilde{\gamma}$.
\end{theorem}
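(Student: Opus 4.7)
The plan is to view the iterative step as a fixed-point map $T$ acting on $c_{i-1}(\cdot)$ and producing $c_i(\cdot)$ according to equation (\ref{iter_fix}), and then to invoke the Banach fixed-point theorem in an appropriate norm on $\Omega$. Concretely, I would set up an $L^2(\Omega \times \mathrm{P})$-type norm (or, equivalently, work with $\| \cdot \|_{L^2}$-moments with respect to the underlying probability measure) so that the stochastic integral can be controlled via the It\^o isometry. The goal is to produce an inequality of the form
\begin{eqnarray}
\| c_{i+1}(t^{n+1}) - c_i(t^{n+1}) \| \le \tilde{\gamma} \, \| c_i(t^{n+1}) - c_{i-1}(t^{n+1}) \|,
\end{eqnarray}
with $\tilde{\gamma} < 1$ a combination of $\gamma_1$, $\gamma_2$ and $\Delta t$.

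First I would subtract two consecutive iterates and use the triangle inequality to split the difference into a deterministic contribution coming from the Burgers semigroup applied to $c(t^n)$ and a stochastic contribution coming from the It\^o integral. For the deterministic part, the preceding lemma gives $\|S_b(x^n) x - S_b(y^n) y\| \le \gamma_1 \|x - y\|$, so the first summand in (\ref{iter_fix}) is immediately bounded by $\gamma_1 \, \| c_{i-1} - c_{i-2} \|$. The same argument with the Lipschitz bound on $\sigma$ and $\frac{\partial \sigma}{\partial c}$ (assumption made above) controls the integrand in the stochastic term.

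For the stochastic part, I would square the difference, take expectations, and apply the It\^o isometry to convert the $dW_s$ integral into a standard Lebesgue integral in $s$. The integrand is of the form $S_{Burgers}(t^{n+1}-s, c_{i-1}) \sigma(c_{i-1}) - S_{Burgers}(t^{n+1}-s, c_{i-2}) \sigma(c_{i-2})$; by adding and subtracting $S_{Burgers}(t^{n+1}-s, c_{i-2}) \sigma(c_{i-1})$ and applying the lemma twice, one obtains a bound proportional to $(\gamma_1 + \gamma_2) \, \| c_{i-1}(s) - c_{i-2}(s) \|$. The It\^o isometry then produces a factor of $\Delta t$ (or $\sqrt{\Delta t}$ after taking square roots), so the total contraction factor becomes
\begin{eqnarray}
\tilde{\gamma} = \gamma_1 + C (\gamma_1 + \gamma_2) \sqrt{\Delta t},
\end{eqnarray}
with $C$ a constant depending on the uniform bounds on $\sigma$ and its derivative. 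By choosing $\Delta t$ sufficiently small, $\tilde{\gamma} < 1$ and the Banach fixed-point theorem on the closed subset $\Omega$ of $\R^N$ yields a unique fixed point $x^\ast$ together with linear (geometric) convergence with rate $\tilde{\gamma}$.

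The main obstacle will be the rigorous treatment of the stochastic integral: the dependence of $S_{Burgers}(t^{n+1}-s, c_{i-1}(t^{n+1}-s))$ on the iterate $c_{i-1}$ itself makes the integrand nonlinear in $c_{i-1}$, so one must carefully verify adaptedness and square-integrability of the integrand before applying It\^o's isometry, and then combine the Lipschitz bounds for $S_{Burgers}$ and $\sigma$ in a way that keeps the constants in front of $\| c_{i-1} - c_{i-2} \|$ well-controlled. Once this is done, collecting the deterministic and stochastic contributions into a single factor $\tilde{\gamma}$ and checking $\tilde{\gamma} < 1$ (which forces a smallness condition on $\Delta t$) closes the argument.
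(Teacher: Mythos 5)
Your proposal is correct in outline and reaches the same conclusion, but it takes a genuinely different route through the stochastic term. The paper first replaces the variation-of-constants integral in (\ref{iter_fix}) by its Taylor--It\^{o} (Milstein) expansion, so the contraction estimate is carried out on the fully discrete update formula: the difference of consecutive iterates is split into the three Milstein summands, the noise increments $\Delta W_{t^{n+1}}$ and $(\Delta W_{t^{n+1}}^2-\Delta t)$ are absorbed into constants $C_1, C_2, C_3$, and the Lipschitz bounds on $S_{Burgers}$ and $\partial \sigma/\partial c$ give a factor $\tilde{\gamma}\,\tilde{C}$ per iteration, after which a recursion in $i$ (rather than an explicit appeal to Banach's theorem) yields convergence. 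You instead keep the stochastic integral intact, pass to mean-square norms, and use the It\^{o} isometry to extract the factor $\sqrt{\Delta t}$; this is the more standard and more rigorous treatment of the noise, since it avoids bounding the random increments pathwise by deterministic constants, and it makes the smallness condition on $\Delta t$ explicit through $\tilde{\gamma} = \gamma_1 + C(\gamma_1+\gamma_2)\sqrt{\Delta t}$. The price is the adaptedness and square-integrability issue you correctly flag, which the paper sidesteps entirely by discretizing first. One small point to watch: in (\ref{iter_fix}) both iterates propagate the same datum $c(t^n)$, so the deterministic difference is $\| (S_{Burgers}(\Delta t, c_{i-1}(t^n)) - S_{Burgers}(\Delta t, c_{i-2}(t^n))) c(t^n) \|$, and you need Lipschitz dependence of $S_{Burgers}$ on its linearization argument multiplied by a bound on $\| c(t^n) \|$, not merely the contraction property $\| S_b \| \le \gamma_1$; the paper's constant $C_1$ plays exactly this role, and your sketch currently glosses over it by asserting the first summand is "immediately" bounded by $\gamma_1 \| c_{i-1} - c_{i-2} \|$.
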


\begin{proof}

We apply the iterative scheme:
\begin{eqnarray}
&& \int_{t^n}^{t^{n+1}} \frac{\partial c_i}{\partial t} dt =  - \int_{t^n}^{t^{n+1}}  \frac{\partial f(c_{i-1}, c_i)}{\partial x} +   \int_{t^n}^{t^{n+1}} \sigma(c_{i-1}(t)) \; d W_t , \; c_i(x, t^n) = c(x, t^n) ,  \nonumber \\
&& c_i(t^{n+1}) = S_{Burgers}(\Delta t, c_{i-1}(t^n)) \; c(t^n) + \nonumber \\
&& + \int_{t^n}^{t^{n+1}} S_{Burgers}(t^{n+1} -s, c_{i-1}(t^{n+1} -s )) \; \sigma(c_{i-1}(s)) \; d W_s ,  
\end{eqnarray}
and evalutate the integral based on the Taylor-Ito scheme at the integration
point $t^n$ and obtain, accuracy of the Milstein-scheme, which is given as:
\begin{eqnarray}
&& c_i(t^{n+1}) =  S_{Burgers}(\Delta t, c_{i-1}(t^n)) c(t^n) + \\
&& +  S_{Burgers}(\Delta t, c_{i-1}(t^n)) \sigma(c(t^{n})) \; \Delta W_{t^{n+1}} + \nonumber  \\
&&  + \frac{1}{2} \left( S_{Burgers}(\Delta t, c_{i-1}(t^n)) \right)^2 \sigma(c(t^{n})) \left(\frac{\partial \left( \sigma(c)\right)}{\partial c} \right)|_{c_{i-1}(t^{n})}  (\Delta W_{t^{n+1}}^2 - \Delta t) . \nonumber
\end{eqnarray}

We apply:
\begin{eqnarray}
&& || c_i(t^{n+1}) - c_{i-1}(t^{n+1})|| \le \nonumber \\ 
&& \le ||  S_{Burgers}(\Delta t, c_{i-1}(t^n)) - S_{Burgers}(\Delta t, c_{i-2}(t^n)) || || c(t^n) || + \nonumber \\
&& || S_{Burgers}(\Delta t, c_{i-1}(t^n))- S_{Burgers}(\Delta t, c_{i-2}(t^n)) || || \sigma(c(t^{n})) \; \Delta W_{t^{n+1}} || + \nonumber \\
&& + \frac{1}{2} || \left( S_{Burgers}(\Delta t, c_{i-1}(t^n)) \right)^2  \left( \frac{\partial \left( \sigma(c)\right)}{\partial c} \right)|_{c_{i-1}(t^{n})} - \nonumber \\
&& -  \left( S_{Burgers}(\Delta t, c_{i-2}(t^n)) \right)^2 \left( \frac{\partial \left( \sigma(c)\right)}{\partial c} \right)|_{c_{i-2}(t^{n})} ||  \cdot \nonumber \\
&& \cdot || \sigma(c(t^{n})) \;  (\Delta W_{t^{n+1}}^2 - \Delta t) || \le \nonumber \\
&& \le \gamma_1 ||c_{i-1}(t^{n}) - c_{i-2}(t^{n}) || C_1 +  \gamma_1 ||c_{i-1}(t^{n}) - c_{i-2}(t^{n}) || C_2 + \nonumber \\
&& + \gamma_1 \gamma_2 ||c_{i-1}(t^{n}) - c_{i-2}(t^{n}) || C_3  \le \nonumber \\
&& \le \tilde{\gamma} ||c_{i-1}(t^{n}) - c_{i-2}(t^{n}) || \tilde{C} ,
\end{eqnarray}
where we assume the constants $C_1, C_2, C_3$ and $\tilde{C}$ are bounded
and $\tilde{\gamma} < 1$ with sufficient small $\Delta t$ and $\Delta x$.

We apply the recursion and obtain:
\begin{eqnarray}
&& || c_i(t^{n+1}) - c_{i-1}(t^{n+1})|| \le \tilde{\gamma}^i ||c_{i-1}(t^{n+1-i}) - c_{i-2}(t^{n+1-i}) || \tilde{C}^i ,
\end{eqnarray}
where we obtain $\lim_{n, i \rightarrow \infty} || c_i(t^{n+1}) - c_{i-1}(t^{n+1})|| \rightarrow 0$.

\end{proof}

\begin{remark}
We obtain a convergence to the fixpoint of the equation based on the iterative scheme. We also obtain an acceleration of the solver-process and a reduction of the numerical error with additional iterative steps.  
\end{remark}

\subsection{Numerical Error Analysis (strong and weak errors)}

For the verification of the theoretical results for the
iterative splitting scheme in section \ref{num}, we deal with the following
numerical error analysis.

We present the convergence rates of the following weak
errors:
\begin{itemize}
\item Weak errors:
\begin{eqnarray}
&&  err_{weak, \Delta t} = | E(c_{\Delta t, Scheme}) - c_{\Delta t, pure burg}) | , \nonumber \\
 && = \left|  \left( \frac{1}{N} \sum_{j=1}^N c_{\Delta t, Scheme, j} \right) - c_{\Delta t, pure burg } \right|,
\end{eqnarray}
where $N$ are the number of solutions of the stochastic Burgers' equation.

Further $c_{\Delta t, pure burg }$ is the solution of the pure Burgers' equation and
$c_{\Delta t, Scheme, j}$ is the $j$-th solution of the stochastic Burgers' equation.
We assume to have $10 -100$ runs of the stochastic Burgers' equation.

\item Variance for the solution at $t = t_n$ and $N_s$-sample paths:
\begin{eqnarray}
\label{estimated_2}
&& Var(c_{\Delta t, Scheme}(t_n)) = E( \left(c_{\Delta t, Scheme}(t_n) - E(c_{\Delta t, Scheme}(t_n)) \right)^2) = \nonumber \\
&&  = \left( \frac{1}{N} \sum_{j=1}^{N} (c_{\Delta t, Scheme}(t_n))^2 \right) -  E(c_{\Delta t, Scheme, j}(t_n)) ,
\end{eqnarray}
we deal with $N_s$ number of seeds and  $method = \{AB, ABA, BAB, iter \}$, $c_{\Delta t, Scheme,j}$ is the result of the
method at $t^n$ in the seed $j$. Further, we apply for the iterative scheme $iter=1, \ldots, iter=4$ steps.

\end{itemize}

\begin{remark}
In the numerical examples, we obtain the weak convergence rates with the weak error.
Here, we also apply the weak error to obtain an overview to the
accuracy of the numerical schemes.
\end{remark}

\section{Numerical experiments}
\label{numerics}

In the following numerical experiments, we concentrate on pure stochastic Burgers' equation, which is given as:
\begin{eqnarray}
&& \frac{\partial c}{\partial t} +  \frac{\partial f(c)}{\partial x} = \sigma(c) \frac{\partial W}{\partial t} ,  \; (x, t) \in [0, X] \times [0, T] , \\
&& c(x, 0) = c_0(x) , \; x \in [0, X] .
\end{eqnarray}

For the discretization with Finite Difference or Finite Volumes,
we deal with the CFL condition of the two explicit discretized terms as:
\begin{itemize}
\item
$\Delta t \le \frac{\Delta x}{|u_i^{n-1}|}$, for the deterministic part, 
\item
$\Delta t \le \frac{(u_i^{n-1})^2}{(\sigma(u_i^{n-1}))^2 \xi^2} $, for the stochastic part, 
\item $\Delta t \le \left( \frac{1}{\frac{|u_i^{n-1}|}{\Delta x} +  \frac{\sigma(u_i^{n-1}) |\xi|}{u_i^{n-1}} } \right)^2$, for both terms and we assume $\sqrt{\Delta t} \le \Delta t$ ,
\end{itemize}
where we use estimates for $u_i^{n-1}$ and $\Delta W = \sqrt{\Delta t} \; \xi$ with $\xi$ is Gaussian normal distributed.

We apply the following methods, that we discussed in section \ref{num}:
\begin{itemize}
\item AB-splitting,
\item ABA-splitting,
\item BAB-splitting,
\item iter-splitting (where we apply $i=1, \ldots, 4$ iterative steps).
\end{itemize}

The $L_1$-errors of the different numerical schemes are given in figure \ref{err_AB}.
\begin{figure}[ht]
\begin{center} 
\includegraphics[width=10.0cm,angle=-0]{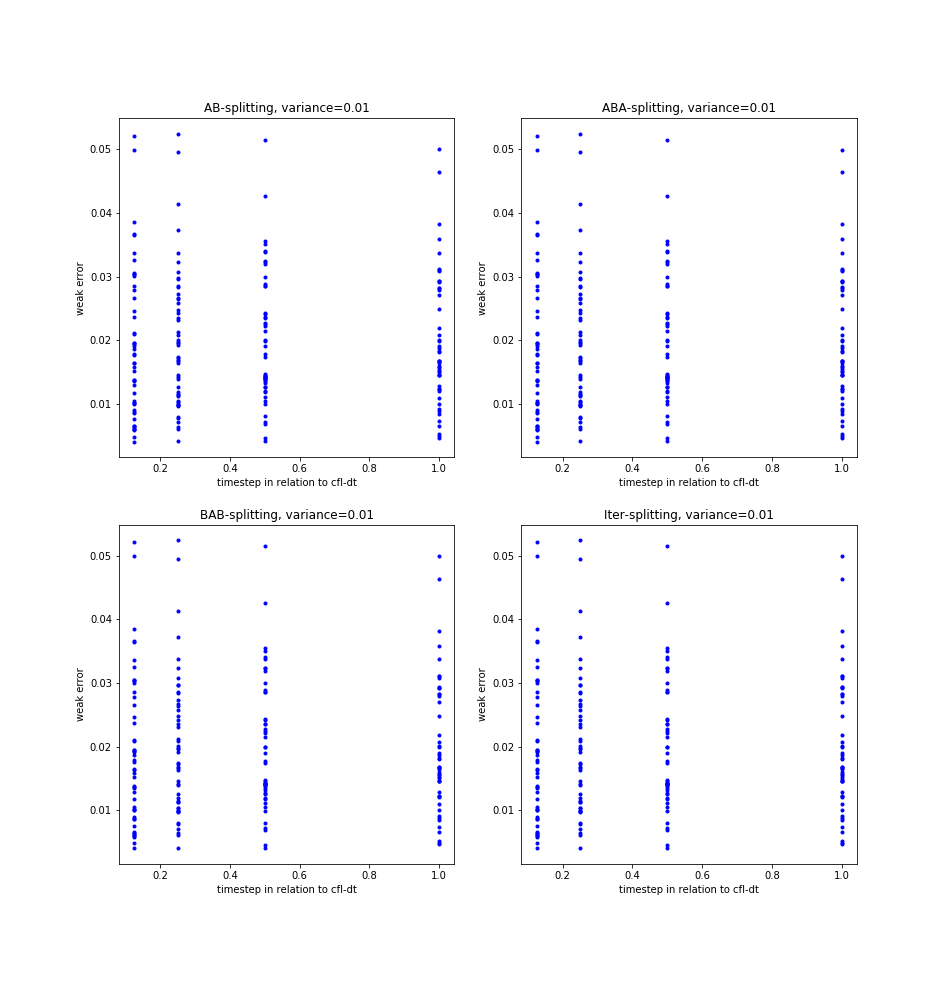}
\end{center}
\caption{\label{err_AB} Left upper figure: $L1$-errors of the AB-spitting approach, right upper figure: $L1$-errors of the ABA-spitting approach, left lower figure: $L1$-errors of the BAB-spitting approach and right lower figure: $L1$-errors of the itertative-spitting approach.}
\end{figure}

The comparison of the schemes with the iterative splitting approach is given in figure \ref{split_com_iter}.
\begin{figure}[ht]
\begin{center}  
\includegraphics[width=10.0cm,angle=-0]{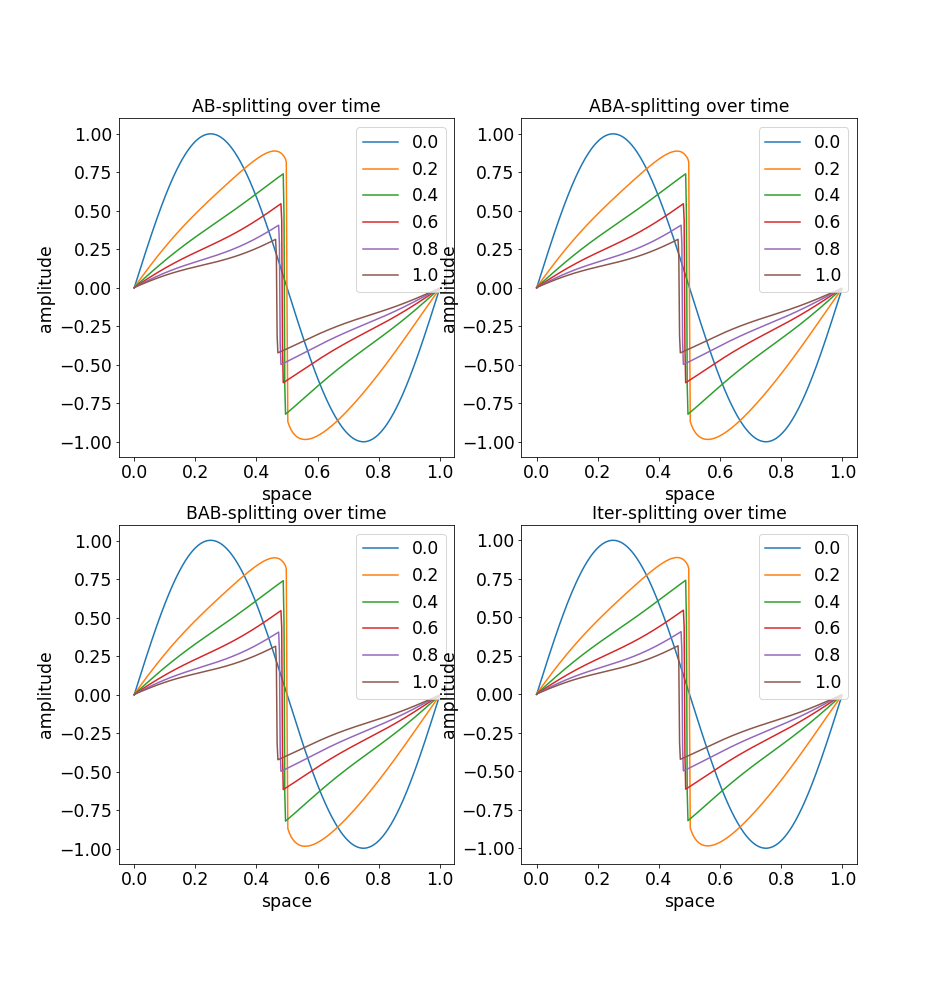}
\end{center}
\caption{\label{split_com_iter} Simulation-results of the different methods and the improved iterated approaches.}
\end{figure}

The $L1$ errors are defined as errors to compare to the weak and strong
convergence, while $ err_{weak}$ is defined to see the error of the
averaged stochastic solution to the deterministic solution.
The errors of the schemes are presented in figure \ref{iter_error}.
\begin{figure}[ht]
\begin{center}  
\includegraphics[width=10.0cm,angle=-0]{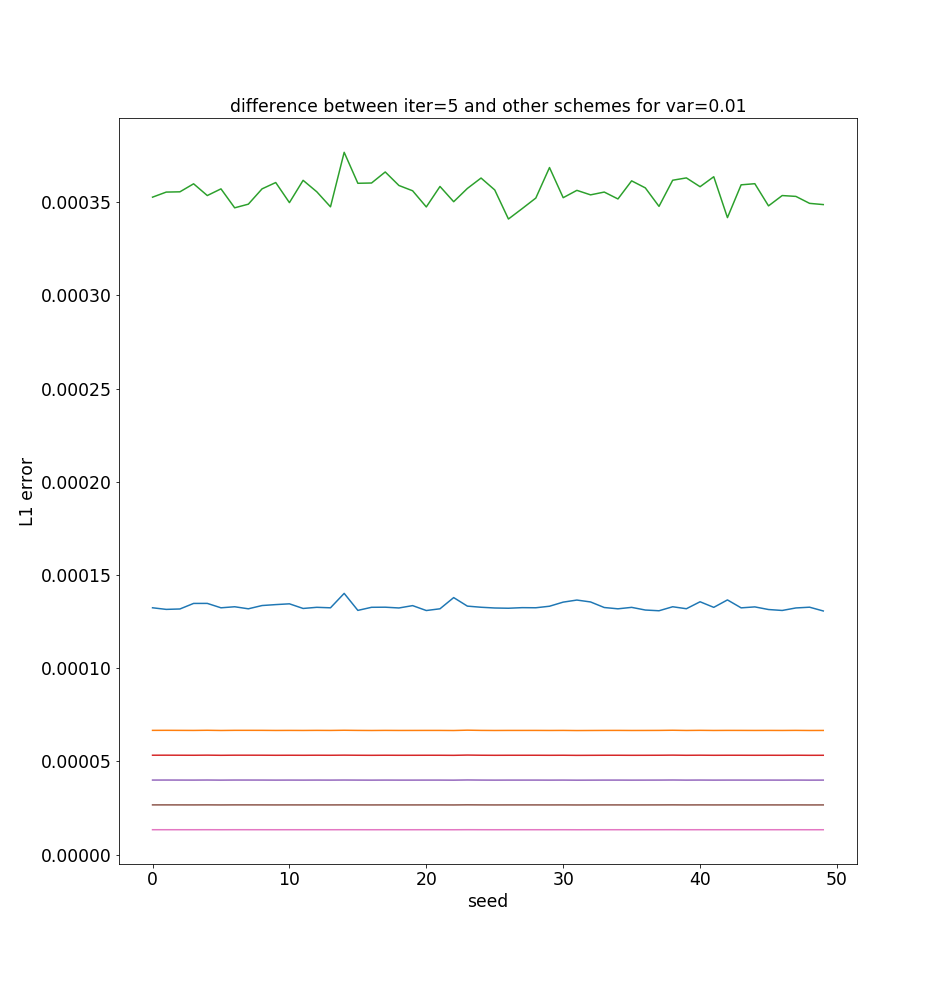}
\end{center}
\caption{\label{iter_error} $L_1$-error of the simulation-results of the different methods with different iterative steps.}
\end{figure}

The seeds and the blow-ups of the noniterative splitting approaches
are presented in \ref{iter_error_2}.
For the experiments and the numerical tests, we initialize the pseudo random generator for each new perturbation with a new seed.

\begin{figure}[ht]
\begin{center}  
\includegraphics[width=8.0cm,angle=-0]{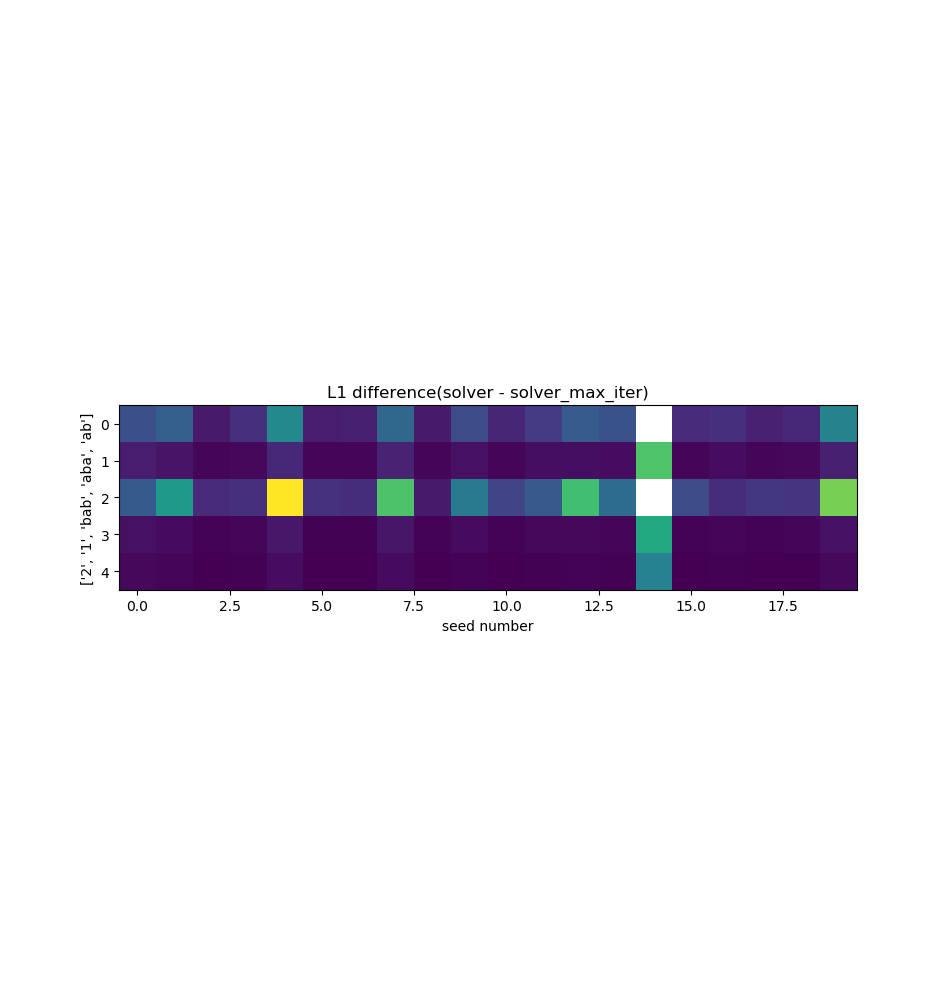}
\includegraphics[width=8.0cm,angle=-0]{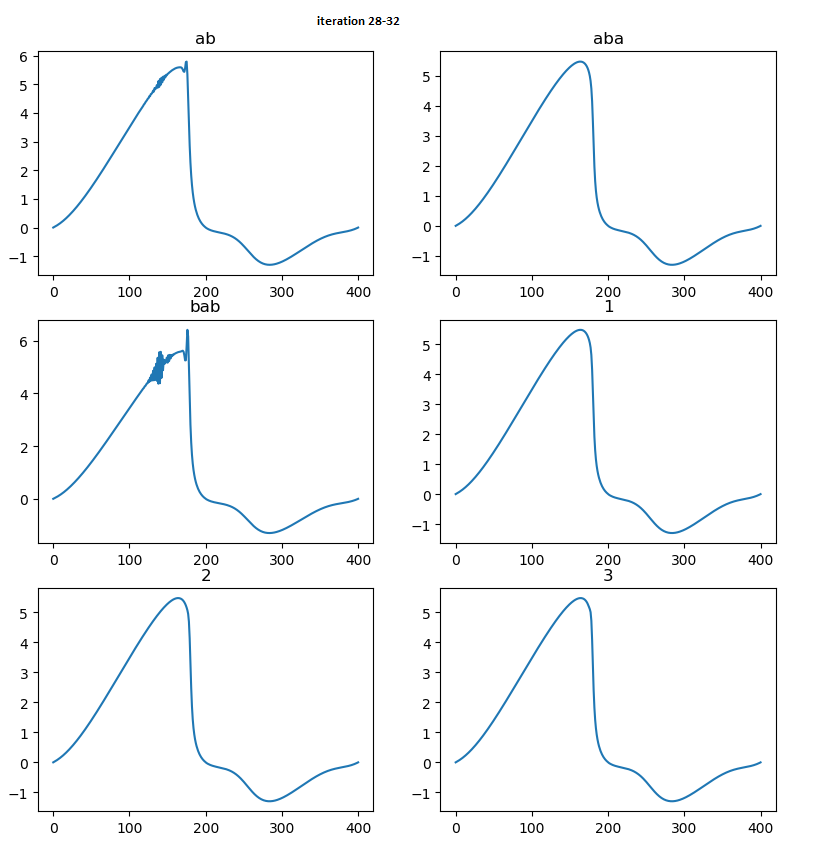}
\end{center}
\caption{\label{iter_error_2} In the left figure, we see the different seeds of the non-iterative and iterative approaches. The right figure presents the benefits of the iterative approaches, while in the non-iterative approaches, we have blow-up.}
\end{figure}

The numerical errors between the perturbed and unperturbed solution and the averaged results are given in figure \ref{iter_error_3}.
\begin{figure}[ht]
\begin{center}  
\includegraphics[width=8.0cm,angle=-0]{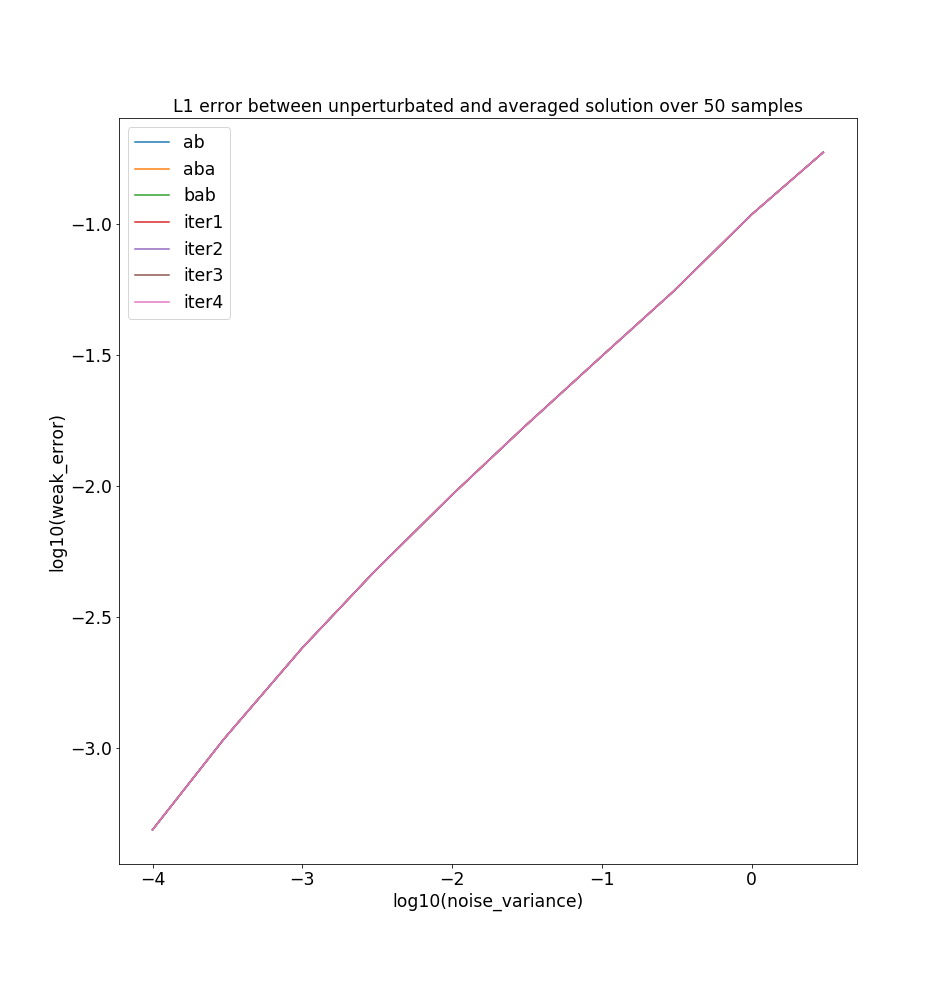}
\includegraphics[width=8.0cm,angle=-0]{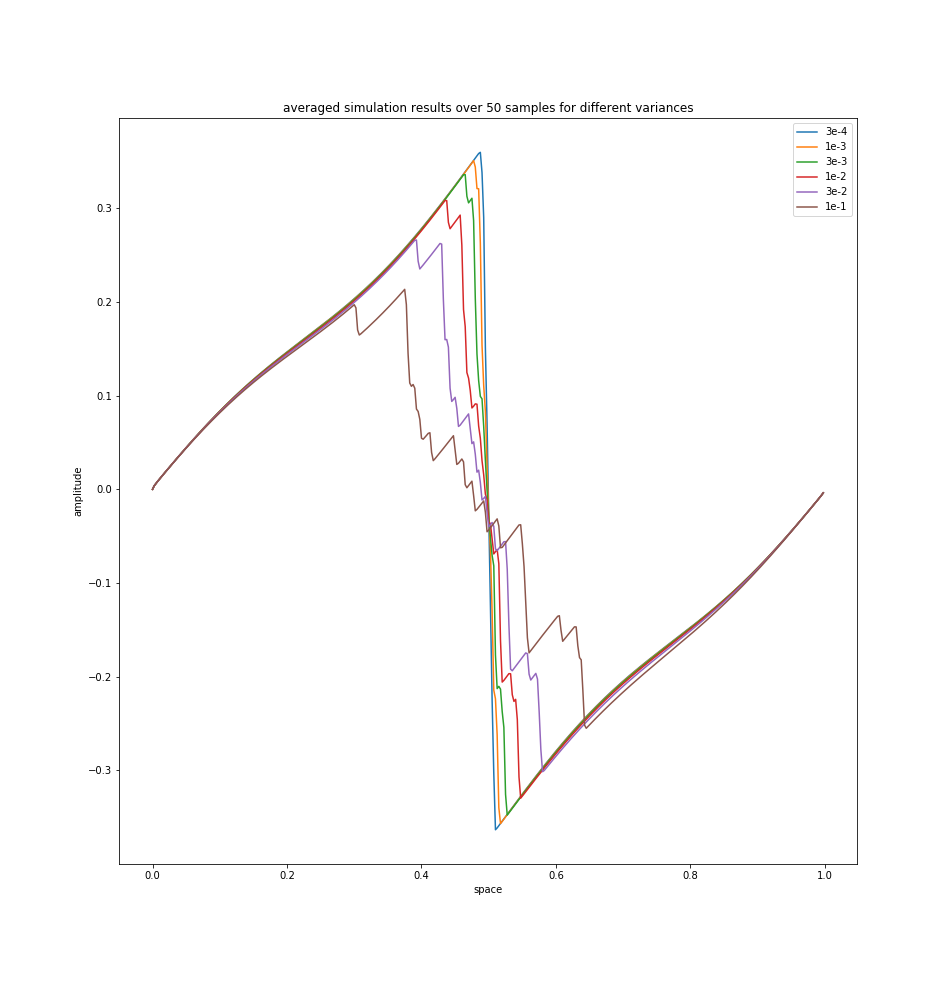}
\end{center}
\caption{\label{iter_error_3} In the left figure, we see $L_1$ error between unperturbated solution and average over $50$ perturbed solutions, only AB-splitting, dependent on variances. The right figure presents the averaged results for difference variances.}
\end{figure}

\begin{remark}
With the iterative splitting schemes, we obtain more accurate results and reduce the numerical errors in each iterative step.
\end{remark}

\section{Conclusion}
\label{concl}

We presented new iterative methods based on the Picard's approximation. Such methods allow to 
obtain more accurate results, while we could reduce the error with the iterative steps.
We presented first numerical results with the stochastic Burgers' equation and multiplicative noise.
In future, we also apply more delicate SPDEs with respect to mixed time and spatial noises.

\section{Appendix}

In the following, we add some more details to the
iterative and non-iterative schemes.

\subsection{Derivations of the methods}

The SDE is given as:
\begin{equation}
\begin{aligned}
u(x,t)_t + 0.5 (u(x,t)^2)_x &= \sigma(u(x,t)) W(t)_t\\
u_t + 0.5 (u^2)_x &= \sigma(u) W_t,
\end{aligned}
\end{equation}
$u = u(x,t)$ is a spatial dependent amplitude, that changes over time. $W=W(t)$ is a Wiener process (Standard Brownian motion, summation of white noise) over time and $W_t$ is its time derivative; so basically white noise. $\sigma(u,t)$ is the diffusion coefficient and can further be time variant or dependent on past values of $u$. For the sake of brevity, some variables stay omitted.

Ignoring the stochastic part on the right, one can transform the deterministic part and match it against the standard Burgers' equation (missing diffusion term $v=0$ in this case),

\begin{equation}
\begin{aligned}
u_t + 0.5 \frac{\partial}{\partial x}(u^2) = u_t + u u_x &= 0 \\
u_t + u u_x &= v \frac{\partial^2 u}{\partial x^2}
\end{aligned}
\end{equation}

which is solved on a discrete time grid by the conservation law solver with Engquist-Osher in the function $SCL$ (solve conservation law).
\\

A stochastic equation of the form 
\begin{equation}
d X_t = \mu X dt + \sigma X dW_t
\end{equation}

with real constants $\mu$ and $\sigma$ can be solved by the Milstein method and has the numerical solution 
\begin{equation}
X_{t+\Delta t} = X_t + a(X_t) \Delta t + b(X_t) \Delta W_t + \frac{1}{2} b(X_t) b^\prime(X_t)((\Delta W_t)^2 - \Delta t)
\end{equation}
 with drift $a(x)= \mu x$ and diffusion $b(x)= \sigma x$. Via pattern matching, one can deduce a numerical solution for the stochastic part:

\begin{equation}
\begin{aligned}
u_{t+\Delta t}  &= u_t + \sigma(u_t)W_t \\
u_{t+\Delta t} &= u_t + \sigma(u_t) \Delta W_t + \frac{1}{2} \sigma(u_t) \sigma^\prime(u_t)((\Delta W_t)^2 - \Delta t) .
\end{aligned}
\end{equation}

Note that the notation of an index like $_t$ may refer to a derivative in continuous time or time stamp in discrete time. This solver is implemented in the function $SSDE$ (solve stochastic differential equation).

\subsection{AB-splitting}

The AB-splitting approach divides the time scale into $N$ intervals. Each interval is further split into $J$ subintervals. The AB-splitting takes an initial condition $u(t)$ and solves the deterministic problem on the subinterval $J$ to obtain a solution $u(t+\Delta t)_1$. The deterministic result is used as initial condition for the stochastic solver, which calculates the final result $u(t +\Delta t)$. This process is repeated $N$ times to obtain the final result.

\subsection{ABA-splitting}

This algorithm works almost identically to the upper one, but with a slight modification with regards to the order and length of the solvers. The deterministic solver is used on the first half of the subinterval, resulting in a helper solution $u(t + \frac{1}{2} \Delta t)_1$. The stochastic solver than proceeds to calculate a solution on the whole subinterval and yields another solution $u(t+ \Delta t)_2$, which is used as initial condition for the deterministic solver in order to yield the final result $u(t+\Delta t)$ for the second half of the subinterval.

\subsection{BAB-splitting}

The same as above, but stochastic and deterministic solver are exchanged.

\subsection{iterative splitting (after discretization)}

The iterative scheme, here we apply the iterative steps of
a stochastic equation of the form 
\begin{equation}
d X_{i,t} = a(X) dt + b(X) dW_t
\end{equation}
while we obtain an analytical solution of $d X_{i,t} = a(X_i) dt $, which is $X_{t+\Delta t} = S(a(X_t), \Delta t) X_{t}$.

We apply the Milstein-scheme plus a fixpoint iterative scheme,
which is given as
\begin{eqnarray}
&& X_i(t^{n+1}) =  S_{Burgers}(\Delta t, a(X_{i-1}^{n+1})) X^n + b(X_{i-1}^{n+1}) \Delta W + \nonumber \\ 
&& +  b(X_{i-1}^{n+1}) \frac{\partial b(X)}{\partial X}|_{X = X_{i-1}^{n+1}} (\Delta W^2 - \Delta t) ,
\end{eqnarray}
 with drift $a(x)$ and diffusion $b(x)$.

\subsection{iterative splitting (before discretization)}

 iterative Steps $i > 1$):
\begin{eqnarray}
&& \hspace{-1.5cm} c_i(t^{n+1}) =  S_{Burgers}(\Delta t, c(t^n)) c(t^n) +  S_{Burgers}(\Delta t, c(t^n)) \sigma(c_{i-1}(t^{n+1})) \; \Delta W_{t^{n+1}} +  \\
&&  + \frac{1}{2} \left( S_{Burgers}(\Delta t, c(t^n)) \right)^2 \sigma(c_{i-1}(t^{n+1})) \left(\frac{\partial \left( \sigma(c)\right)}{\partial c} \right)|_{c_{i-1}(t^{n+1})}  (\Delta W_{t^{n+1}}^2 - \Delta t) , \nonumber 
\end{eqnarray}

\subsection{Improvements be the integration of the variation of constants}

$i = 2$ (trapezoidal rule):
\begin{eqnarray}
&& c_2(t^{n+1}) = S_{Burgers}(\Delta t) c(t^n) - \nonumber \\ 
&& - \frac{1}{2} \sigma(c_1(t^{n+1}))^2 \; \Delta t + \sigma(\frac{c_1(t^{n+1}) + c(t^{n})}{2}) \; \Delta W_t .  
\end{eqnarray}

$i > 2$:
\begin{eqnarray}
&& c_i(t^{n+1}) = S_{Burgers}(\Delta t) c(t^n) - \nonumber \\ 
&&  - \frac{1}{2} \sigma(c_1(t^{n+1}))^2 \; \Delta t + \sigma(\frac{c_{i-1}(t^{n+1}) + c(t^{n})}{2}) \; \Delta W_t .  
\end{eqnarray}
with $i = 3, \ldots, I$.

\subsection{Numerical Errors: Weak and Strong error}

\begin{itemize}
\item Strong error:
\begin{eqnarray}
&& {L_1}_{strong, \Delta t, \Delta x, s} = E(|U_{ref} - U_{scheme}|) = \nonumber \\
&& =  \sum_{s = 1}^{N_s} ( \sum_{j = 1}^N | u_{\Delta t, \Delta x, j, reference, s} - u_{\Delta t, \Delta x, j, scheme, s} |  /N ) / N_s,
\end{eqnarray}
where $s$ is the index of the different seeds, meaning $W(s)$ (different Wiener-processes) and $N=N_t \cdot N_x$ is the number of the $N_t$ temporal and $N_x$ spatial steps. Furthermore $reference$ is the deterministic reference solution and we test the schemes $ scheme = \{AB, \; ABA, \; BAB, \; iter\}$.
\item Weak error:
\begin{eqnarray}
&& {L_1}_{weak, \Delta t, \Delta x, s} = | E(f(U_{ref}) - E(f(U_{scheme}))| = \\
&& = | \sum_{s = 1}^{N_s} ( \sum_{j = 1}^N  u_{\Delta t, \Delta x, j, reference,s} / N ) / N_s  -  \sum_{s = 1}^{N_s} ( \sum_{j = 1}^N  u_{\Delta t, \Delta x, i, scheme,s} / N ) / N_s | , \nonumber
\end{eqnarray}
where we assume $f(u) = u$, further $s$ is the index of the different seeds for $W(s)$ (different Wiener-processes) and $N_s$ are the number of seeds.
$N=N_t \cdot N_x$ is the number of the $N_t$ temporal and $N_x$ spatial steps. 
Additionally $reference$ is the deterministic reference solution and we test the schemes $ scheme = \{AB, \; ABA, \; BAB, \; iter\}$.
The weak error is defined with the average value of the stochastic result, while the reference solution is the deterministic result.
\end{itemize}

\section*{Acknowledgments}\label{sec:Acknowledgments}

The authors would like to thank Dr. Erlend Briseid Storrosten (University of Oslo, Norwey) for his python-code. Based on his code and the discussion with him, we could modify the experiments for our splitting approaches.

\bibliographystyle{plain}

\end{document}